\documentclass[11pt]{amsart} 
\usepackage[foot]{amsaddr}
\usepackage{amsmath,amssymb,bm,color,float} 
\usepackage{graphicx}
\usepackage{mathrsfs}
\usepackage{dsfont}
\usepackage{hyperref}

 \def\ad#1{\begin{aligned}#1\end{aligned}}  \def\b#1{{\bf #1}} 
\def\a#1{\begin{align*}#1\end{align*}} 
\def\an#1{\begin{align}#1\end{align}}
 \def\t#1{\hbox{\rm{#1}}}

\def\p#1{\begin{pmatrix}#1\end{pmatrix}} 
  \numberwithin{equation}{section}

\def\3bar{{|\hspace{-.02in}|\hspace{-.02in}|}}

\newtheorem{theorem}{Theorem}[section]
\newtheorem{lemma}[theorem]{Lemma}

\def\ad#1{\begin{aligned}#1\end{aligned}} 
\def\b#1{\mathbf{#1}} 

\def\t#1{\operatorname{#1}}

\numberwithin{equation}{section}

\title[Least-Squares Weak Galerkin]{A Least-Squares Weak Galerkin Finite Element Scheme for Cauchy Problems in Convection--Diffusion}

\author{Chunmei Wang} 
\address{Department of Mathematics, University of Florida, Gainesville, FL 32611, USA.} 
\email{chunmei.wang@ufl.edu} 

\author{Shangyou Zhang}
\address{Department of Mathematical Sciences, University of Delaware, Newark, DE 19716, USA} 
\email{szhang@udel.edu}

\begin{document}
\begin{abstract}
We introduce and rigorously analyze a least-squares weak Galerkin (LS-WG) finite element method for the severely ill-posed Cauchy problem of convection--diffusion equations. The proposed framework utilizes weak derivatives defined on a class of discontinuous weak functions, enabling the natural treatment of complex boundary conditions and internal interfaces. A key advantage of the least-squares formulation is that it transforms the underlying non-self-adjoint operator into a discrete linear system that is inherently symmetric and positive definite (SPD). We demonstrate the geometric flexibility of the method on arbitrary polygonal and polyhedral partitions. Furthermore, we establish the uniqueness of the numerical solution and derive optimal-order error estimates in a carefully defined discrete energy norm. Extensive numerical tests are presented to confirm the theoretical convergence rates and highlight the algorithm's robustness and efficiency compared to standard Galerkin approaches.
\end{abstract}

\keywords{
weak Galerkin, finite element methods,  least-squares, Cauchy problem,  convection diffusion, weak gradient, weak Laplacian,  polygonal or polyhedral meshes. }
 
\subjclass[2010]{65N30, 65N15, 65N12, 65N20}
  \maketitle
\section{Introduction} 
In this work, we introduce a least-squares weak Galerkin (LS-WG) finite element method based on completely discontinuous functions to solve Cauchy problems for convection--diffusion equations. To present the proposed method, we consider the following model problem: find a scalar function $u$ such that
\begin{equation}\label{model}
\begin{cases} 
-\epsilon \Delta u + \mathbf{b} \cdot \nabla u = f & \text{in } \Omega, \\
u = g_1, \quad \nabla u \cdot \mathbf{n} = g_2 & \text{on } \Gamma_1,
\end{cases}
\end{equation}
where $\Omega \subset \mathbb{R}^d$ ($d=2, 3$) is an open, bounded, convex polytopal domain. The boundary $\partial\Omega$ is partitioned into two disjoint, relatively open subsets $\Gamma_1$ and $\Gamma_2$, such that $\partial\Omega = \overline{\Gamma}_1 \cup \overline{\Gamma}_2$. Here, $\Gamma_1$ (with strictly positive measure, $|\Gamma_1|>0$) represents the accessible boundary where both Dirichlet and Neumann data are prescribed, while $\Gamma_2$ is the inaccessible boundary where no data is provided. 

The physical parameters consist of the constant diffusion coefficient $\epsilon > 0$ and the convective velocity field $\mathbf{b} \in [L^\infty(\Omega)]^d$. The vector $\mathbf{n}$ denotes the unit outward normal to $\partial\Omega$. The source term $f$ and the boundary data $g_1, g_2$ are assumed to be sufficiently regular to ensure the well-posedness of the corresponding discrete formulation. The objective of this study is to reconstruct the solution $u$ throughout the domain $\Omega$ and its trace on the unknown boundary $\Gamma_2$. Given the inherent ill-posedness of this boundary value problem, we employ a stabilized numerical approach through the LS-WG framework to ensure theoretical accuracy and computational robustness.

The convection--diffusion equation serves as a foundational mathematical framework for modeling the transport of physical quantities---such as heat, mass, and chemical concentrations---within a moving medium. While the Cauchy problem for the Poisson equation is widely utilized in fields like plasma physics, electrocardiography, and non-destructive evaluation, the inclusion of the first-order convection term, $\mathbf{b} \cdot \nabla u$, significantly expands this utility. 

In environmental engineering, for instance, this framework is essential for the ``inverse tracking'' of pollutant sources in groundwater or atmospheric flows based on downstream sensor measurements. Similarly, in aerospace and industrial cooling, it facilitates non-intrusive thermal imaging and the estimation of hidden boundary heat fluxes. Other critical applications span biomedical modeling for drug transport reconstruction in the bloodstream and petroleum engineering for characterizing flow paths in porous media via tracer analysis.

Despite its versatility, the Cauchy problem for the convection--diffusion equation introduces substantial mathematical and numerical complexities. Like its pure elliptic counterpart, it is severely ill-posed; the solution lacks continuous dependence on the given data, meaning that infinitesimal high-frequency noise in the measurements on $\Gamma_1$ can result in exponentially large errors in the interior domain. Furthermore, the convection operator introduces unique structural hurdles---most notably, the underlying differential operator is non-self-adjoint. This lack of symmetry significantly complicates the stability analysis and degrades the efficiency of traditional numerical solvers. 

Moreover, in convection-dominated regimes where the diffusion coefficient $\epsilon$ is sufficiently small, the solution often exhibits sharp boundary or interior layers. In these regions, standard finite element methods frequently succumb to non-physical, spurious oscillations. This difficulty is exacerbated by the directional nature of the flow: reconstructing a solution ``upstream'' (against the velocity field $\mathbf{b}$) is inherently more unstable than downstream reconstruction, as the physical convective process naturally dissipates source information as it moves through the domain.

The existing literature reflects this intersection of severe ill-posedness and complex numerical instability. The directional bias inherent in the first-order term implies that upstream reconstruction is exponentially more sensitive to measurement noise than downstream efforts \cite{Ranjbar}. Historically, these challenges have been addressed through various regularization strategies, including Tikhonov-type methods, iterative regularization, and singular value decomposition (SVD) to quantify the decay rate of the operator’s spectrum \cite{He, Ranjbar}. In convection-dominated transport, standard Galerkin methods often fail due to the emergence of sharp layers, prompting the development of stabilized schemes such as streamline-diffusion or super-localized orthogonal decomposition \cite{Bonizzoni}.  

The Weak Galerkin (WG) finite element method, originally introduced in \cite{ellip_JCAM2013} and extensively expanded in subsequent works \cite{wg1, wg2, wg3, wg4, wg5, wg6, wg7, wg8, wg9, wg10, wg11, wg12, wg13, wg14, wg15, wg16, wg17, wg18, wg19, wg20, wg21, itera, wz2023, wy3655}, represents a significant departure from traditional continuous finite element methods. By utilizing weak derivatives and enforcing weak continuity across element interfaces via specifically designed stabilizers, the WG framework provides a highly flexible platform that naturally accommodates general polygonal and polyhedral meshes. 

A notable evolution of this framework is the Primal-Dual Weak Galerkin (PDWG) method \cite{pdwg1, pdwg2, pdwg3, pdwg4, pdwg5, pdwg6, pdwg7, pdwg8, pdwg9, pdwg10, pdwg11, pdwg12, pdwg13, pdwg14, pdwg15}, which formulates numerical approximations as constrained minimization problems. In PDWG, the governing equations are enforced as weak constraints through Lagrange multipliers. This approach yields Euler--Lagrange systems that involve both primal and dual variables, offering favorable stability and symmetry properties, even for non-self-adjoint problems such as linear transport equations \cite{wwhyperbolic, pdwg1, pdwg6, pdwg12}.

Building upon these developments, the objective of this paper is to introduce a novel \emph{Least-Squares Weak Galerkin (LS-WG) finite element method} specifically designed for the Cauchy problem of convection--diffusion equations \eqref{model}. Unlike traditional WG formulations that lead to non-symmetric systems for convection problems, or PDWG methods that significantly increase the number of global unknowns through dual variables, the proposed LS-WG method transforms the governing equations into a least-squares functional minimized in a weak sense. This formulation offers several distinct mathematical and computational advantages:
\begin{itemize}
    \item \textbf{Symmetry and Positivity:} The resulting discretized bilinear form is inherently symmetric and positive definite, leading to an SPD linear system that allows for the use of highly efficient iterative solvers, such as the Conjugate Gradient (CG) method.
    \item \textbf{Robustness:} The least-squares framework provides a natural, built-in stabilization mechanism for convection-dominated regimes, effectively mitigating the spurious oscillations often encountered in standard Galerkin formulations.
    \item \textbf{Mesh Flexibility:} The method retains the hallmark WG advantage of being applicable to arbitrary polytopal meshes with pendant nodes, without requiring matching grids or highly specialized continuous basis functions.
\end{itemize}

In this work, we provide a rigorous theoretical foundation for the proposed LS-WG scheme. We establish the uniqueness of the discrete solution without requiring standard inf-sup conditions. Furthermore, we derive optimal-order error estimates in a carefully defined discrete energy norm. Extensive numerical experiments are presented to validate these theoretical findings and to demonstrate the method's accuracy and robustness in reconstructing solutions to severely ill-posed Cauchy problems. 

The remainder of this paper is organized as follows. In Section~2, we provide a brief mathematical review of the weak gradient, the weak Laplacian, and their corresponding discrete counterparts. Section~3 introduces the detailed LS-WG formulation tailored for the convection--diffusion Cauchy problem. The theoretical foundation of the method is addressed in Section~4, where we establish the   uniqueness of the numerical solution. In Section~5, we derive the error equations for the LS-WG finite element approximation, which serves as the critical basis for the subsequent convergence analysis. Section~6 is devoted to the rigorous proof of optimal-order error estimates in a discrete Sobolev norm. Finally, in Section~7, we present a series of numerical experiments to demonstrate the stability, accuracy, and efficiency of the proposed LS-WG framework.

Throughout this work, we adopt standard notation for Sobolev spaces and their associated norms. For any open, bounded domain $D \subset \mathbb{R}^d$ with a Lipschitz continuous boundary, $\|\cdot\|_{s,D}$ and $|\cdot|_{s,D}$ denote the norm and seminorm of the Sobolev space $H^s(D)$ for $s \ge 0$, respectively. The corresponding inner product is denoted by $(\cdot, \cdot)_{s,D}$. In the special case where $s=0$, the space $H^0(D)$ coincides with $L^2(D)$, with the norm and inner product denoted by $\|\cdot\|_D$ and $(\cdot, \cdot)_D$, respectively. For simplicity, the subscript $D$ is omitted when $D = \Omega$ or when the domain of integration is clear from the context.

\section{Discrete Weak Gradient and Discrete Weak Laplacian}

In this section, we review the fundamental definitions of the weak gradient and weak Laplacian operators, along with their discrete counterparts, which are essential for the construction of the  WG finite element framework.

Let $T$ be a polygonal domain in $\mathbb{R}^2$ or a polyhedral domain in $\mathbb{R}^3$ with boundary $\partial T$. A \emph{weak function} on $T$ is defined as an ordered triplet $v=\{v_0, v_b, \mathbf{v}_g\}$, where $v_0 \in L^2(T)$ represents the value of $v$ in the interior of $T$, while $v_b \in L^2(\partial T)$ and $\mathbf{v}_g \in [L^2(\partial T)]^d$ represent the values of $v$ and its gradient $\nabla v$ on the boundary $\partial T$, respectively. Crucially, $v_b$ and $\mathbf{v}_g$ are defined independently and are not required to be the traces of $v_0$ and $\nabla v_0$ on $\partial T$, although such a choice is admissible.

We denote the space of all weak functions on $T$ by $\mathcal{W}(T)$, defined as:
\an{ \label{eq:weakspace} \ad{
\mathcal{W}(T) = \Big\{ v = \{v_0, v_b, \mathbf{v}_g\} : v_0 \in L^2(T),& \ v_b \in L^2(\partial T), \\
           &\ \mathbf{v}_g \in [L^2(\partial T)]^d \Big\}. }
 }

\subsection{Weak and Discrete Weak Laplacian}
The \emph{weak Laplacian} of a function $v \in \mathcal{W}(T)$, denoted by $\Delta_w v$, is defined as a linear functional in the dual space of $H^2(T)$ such that
\begin{equation}\label{eq:weak_laplacian_def}
(\Delta_w v, w)_T = (v_0, \Delta w)_T - \langle v_b, \nabla w \cdot \mathbf{n} \rangle_{\partial T} + \langle \mathbf{v}_g \cdot \mathbf{n}, w \rangle_{\partial T}, \quad \forall w \in H^2(T),
\end{equation}
where $\mathbf{n}$ denotes the outward unit normal vector on $\partial T$.

Let $P_r(T)$ be the space of polynomials of degree at most $r$ on $T$. The \emph{discrete weak Laplacian} of $v \in \mathcal{W}(T)$, denoted by $\Delta_{w,r,T} v$, is the unique polynomial in $P_r(T)$ satisfying
\begin{equation}\label{dislap}
(\Delta_{w,r,T} v, w)_T = (v_0, \Delta w)_T - \langle v_b, \nabla w \cdot \mathbf{n} \rangle_{\partial T} + \langle \mathbf{v}_g \cdot \mathbf{n}, w \rangle_{\partial T}, \quad \forall w \in P_r(T).
\end{equation}
If the interior component $v_0$ possesses $H^2$ regularity (i.e., $v_0 \in H^2(T)$), the discrete weak Laplacian can be equivalently expressed through integration by parts as:
\an{\label{dislap2} \ad{
(\Delta_{w,r,T} v, w)_T = &  (\Delta v_0, w)_T + \langle v_0 - v_b, \nabla w \cdot \mathbf{n} \rangle_{\partial T} \\
                  & \ + \langle (\mathbf{v}_g - \nabla v_0) \cdot \mathbf{n}, w \rangle_{\partial T}, \forall w \in P_r(T). }
 }

\subsection{Weak and Discrete Weak Gradient}
The \emph{weak gradient} of a function $v \in \mathcal{W}(T)$, denoted by $\nabla_w v$, is defined as a linear functional in the dual space of $[H^1(T)]^d$ such that
\begin{equation}\label{eq:weak_gradient_def}
(\nabla_w v, \bm{\psi})_T = -(v_0, \nabla \cdot \bm{\psi})_T + \langle v_b, \bm{\psi} \cdot \mathbf{n} \rangle_{\partial T}, \quad \forall \bm{\psi} \in [H^1(T)]^d.
\end{equation}

The \emph{discrete weak gradient} $\nabla_{w,r,T} v$ is the unique vector-valued polynomial in $[P_r(T)]^d$ satisfying
\begin{equation}\label{disgradient}
(\nabla_{w,r,T} v, \bm{\psi})_T = -(v_0, \nabla \cdot \bm{\psi})_T + \langle v_b, \bm{\psi} \cdot \mathbf{n} \rangle_{\partial T}, \quad \forall \bm{\psi} \in [P_r(T)]^d.
\end{equation}
Furthermore, if $v_0 \in H^1(T)$, the discrete weak gradient admits the following equivalent form via integration by parts:
\begin{equation}\label{disgradient_star}
(\nabla_{w,r,T} v, \bm{\psi})_T = (\nabla v_0, \bm{\psi})_T - \langle v_0 - v_b, \bm{\psi} \cdot \mathbf{n} \rangle_{\partial T}, \quad \forall \bm{\psi} \in [P_r(T)]^d.
\end{equation}
\section{Least-Squares Weak Galerkin Algorithm}\label{Section:WGFEM}

In this section, we describe the least-squares weak Galerkin   finite element discretization for the convection--diffusion Cauchy problem \eqref{model}. 

Let $\mathcal{T}_h$ be a shape-regular partition of the domain $\Omega$ into polygonal (2D) or polyhedral (3D) elements as defined in \cite{wy3655}. We denote by $\mathcal{E}_h$ the set of all edges (or faces in 3D) in $\mathcal{T}_h$, and let $\mathcal{E}_h^0 = \mathcal{E}_h \setminus \partial\Omega$ be the set of all interior edges (or faces in 3D). For each element $T \in \mathcal{T}_h$, $h_T$ denotes the diameter of $T$, and the mesh size is given by $h = \max_{T \in \mathcal{T}_h} h_T$.

For a given integer $k \ge 1$, we define the local weak finite element space $W_k(T)$ as:
\an{\label{W-k} \ad{
W_k(T) = \bigl\{ v = \{v_0, v_b, \mathbf{v}_g\} : & v_0 \in P_k(T); \ v_b \in P_k(e), \\
    & \  \mathbf{v}_g \in [P_{k-1}(e)]^d, \ e \subset \partial T \bigr\}. } }
The global weak finite element space $W_h$ is constructed by patching the local spaces $W_k(T)$ across interior edges such that $v_b$  is single-valued on each $e \in \mathcal{E}_h$. We define the subspace $W_h^0 \subset W_h$ to be the set of weak functions with vanishing Cauchy data on the boundary $\Gamma_1$:
\[
W_h^0 = \bigl\{ v = \{v_0, v_b, \mathbf{v}_g\} \in W_h : v_b|_e = 0, \ \mathbf{v}_g \cdot \mathbf{n}|_e = 0, \ e \subset \Gamma_1 \bigr\}.
\]

For any $v \in W_h$, let $\Delta_w v$ and $\nabla_w v$ be the discrete weak Laplacian and discrete weak gradient, respectively, computed element-wise as:
\[
(\Delta_w v)|_T = \Delta_{w,k-1,T}(v|_T), \quad (\nabla_w v)|_T = \nabla_{w,k-1,T}(v|_T), \quad \forall T \in \mathcal{T}_h.
\]

To enforce the connection between the interior and boundary components of the weak functions, we introduce the following stabilizer $s(\cdot, \cdot)$:
\an{ \label{stabilizer} \ad{
s(u, v) = \sum_{T \in \mathcal{T}_h}   & h_T^{-3} \langle u_0 - u_b, v_0 - v_b \rangle_{\partial T} + \\
         & \ h_T^{-1} \langle (\nabla u_0 - \mathbf{u}_g) \cdot \mathbf{n}, (\nabla v_0 - \mathbf{v}_g) \cdot \mathbf{n} \rangle_{\partial T},  \forall u, v \in W_h. 
 } }
The least-squares bilinear form $a(\cdot, \cdot)$ for the LS-WG method is then defined as:
\begin{equation}\label{bilinear_a}
a(u, v) = \sum_{T \in \mathcal{T}_h} \bigl( -\epsilon \Delta_w u + \mathbf{b} \cdot \nabla_w u, -\epsilon \Delta_w v + \mathbf{b} \cdot \nabla_w v \bigr)_T + s(u, v), \quad \forall u, v \in W_h.
\end{equation}

Next, we define the necessary $L^2$ projection operators. For each $T \in \mathcal{T}_h$ and its edges $e \subset \partial T$, let:
\begin{itemize}
    \item $Q_0$: $L^2(T) \to P_k(T)$ be the local projection onto the interior polynomial space;
    \item $Q_b$: $L^2(e) \to P_k(e)$ be the local projection onto the boundary polynomial space;
    \item $Q_n$: $L^2(e) \to P_{k-1}(e)$ be the local projection for the normal component of the gradient;
    \item $\mathbf{Q}_g$: $[L^2(e)]^d \to [P_{k-1}(e)]^d$ be the local vector projection for the gradient component.
\end{itemize}
For the exact solution $u$ of \eqref{model}, its projection into the global space $W_h$ is denoted by $Q_h u = \{Q_0 u, Q_b u, \mathbf{Q}_g (\nabla u)\}$.

The LS-WG finite element scheme for the Cauchy problem \eqref{model} is formulated as follows:

\noindent \textbf{Algorithm 3.1 (LS-WG Scheme).} 
Find $u_h = \{u_0, u_b, \mathbf{u}_g\} \in W_h$ satisfying the boundary conditions $u_b = Q_b g_1$ and $\mathbf{u}_g \cdot \mathbf{n} = Q_n g_2$ on $\Gamma_1$, such that
\begin{equation}\label{al-general}
a(u_h, v_h) = \sum_{T \in \mathcal{T}_h} (f, -\epsilon \Delta_w v_h + \mathbf{b} \cdot \nabla_w v_h)_T, \quad \forall v_h \in W_h^0.
\end{equation}
 
This least-squares formulation naturally transforms the non-symmetric convection--diffusion operator into a symmetric and positive-definite (SPD) system. The inclusion of the stabilizer $s(u, v)$ ensures that the method remains well-posed and that the interior and boundary components converge at the optimal rate.

\section{Solution  Uniqueness}\label{Section:EU}

In this section, we establish the  uniqueness of the solution to the LS-WG scheme \eqref{al-general}. We begin by identifying the commutative properties of the $L^2$ projection operators, which are essential for the subsequent analysis.

Let $\mathcal{Q}_h^{k-1}$ denote the locally defined $L^2$ projections onto the polynomial space $P_{k-1}(T)$ (for scalars) or $[P_{k-1}(T)]^d$ (for vectors) for each element $T \in \mathcal{T}_h$.

\begin{lemma}[Commutative Properties]\label{Lemma:commute}
The $L^2$ projection operators $Q_h$  and $\mathcal{Q}_h^{k-1}$ satisfy the following commutative properties:
\begin{equation}\label{eq:commute1}
\nabla_w(Q_h w) = \mathcal{Q}_h^{k-1}(\nabla w), \quad \forall w \in H^1(\Omega),
\end{equation}
\begin{equation}\label{eq:commute2}
\Delta_w(Q_h w) = \mathcal{Q}_h^{k-1}(\Delta w), \quad \forall w \in H^2(\Omega).
\end{equation}
\end{lemma}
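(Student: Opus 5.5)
The plan is to verify both identities element by element. On each $T\in\mathcal{T}_h$ we test against an arbitrary polynomial in the relevant space of degree $k-1$, expand the definition of the discrete weak operator, and remove each projection using the orthogonality of $L^2$ projections. The whole argument reduces to a degree check: every test quantity that gets paired with a projected component must lie in the range of that projection.

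\emph{Weak gradient.} Let $w\in H^1(\Omega)$ and $\bm{\psi}\in[P_{k-1}(T)]^d$. By \eqref{disgradient},
\[
(\nabla_w Q_h w,\bm{\psi})_T=-(Q_0 w,\nabla\cdot\bm{\psi})_T+\langle Q_b w,\bm{\psi}\cdot\mathbf{n}\rangle_{\partial T}.
\]
We have $\nabla\cdot\bm{\psi}\in P_{k-2}(T)\subset P_k(T)$. On each flat edge $e$, $\mathbf{n}$ is constant, so $\bm{\psi}\cdot\mathbf{n}|_e\in P_{k-1}(e)\subset P_k(e)$. Hence both projections can be dropped. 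Integrating by parts then gives $(\nabla w,\bm{\psi})_T=(\mathcal{Q}_h^{k-1}\nabla w,\bm{\psi})_T$. Since both sides lie in $[P_{k-1}(T)]^d$, the identity \eqref{eq:commute1} follows.

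\emph{Weak Laplacian.} Let $w\in H^2(\Omega)$ and $\phi\in P_{k-1}(T)$. By \eqref{dislap},
\[
(\Delta_w Q_h w,\phi)_T=(Q_0w,\Delta\phi)_T-\langle Q_bw,\nabla\phi\cdot\mathbf{n}\rangle_{\partial T}+\langle \mathbf{Q}_g(\nabla w)\cdot\mathbf{n},\phi\rangle_{\partial T}.
\]
Each projection is removed in turn. First, $\Delta\phi\in P_{k-3}(T)\subset P_k(T)$. Second, $\nabla\phi\cdot\mathbf{n}|_e\in P_{k-2}(e)\subset P_k(e)$. Third, because $\mathbf{n}$ is constant on $e$, we can write $\langle \mathbf{Q}_g(\nabla w)\cdot\mathbf{n},\phi\rangle_e=\langle \mathbf{Q}_g(\nabla w),\phi\,\mathbf{n}\rangle_e$. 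Here $\phi\,\mathbf{n}\in[P_{k-1}(e)]^d$, so the componentwise projection $\mathbf{Q}_g$ drops out. This step is where the degree $k-1$ of $\mathbf{v}_g$ and the componentwise definition of $\mathbf{Q}_g$ are needed.

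After removing the projections, the right-hand side reads
\[
(w,\Delta\phi)_T-\langle w,\nabla\phi\cdot\mathbf{n}\rangle_{\partial T}+\langle\nabla w\cdot\mathbf{n},\phi\rangle_{\partial T}.
\]
Green's second identity, valid for $w\in H^2(T)$ with traces of $w$ and $\nabla w$ in $L^2(\partial T)$, shows this equals $(\Delta w,\phi)_T=(\mathcal{Q}_h^{k-1}\Delta w,\phi)_T$. This gives \eqref{eq:commute2}.

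The main obstacle is purely one of bookkeeping, in the edge terms. It relies on two facts: the edges are flat, so $\mathbf{n}$ is piecewise constant and does not raise polynomial degrees; and the test degree $k-1$ is at most the degrees used for $Q_b$ ($k$) and $\mathbf{Q}_g$ ($k-1$). If $\mathbf{Q}_g$ were replaced by the normal projection $Q_n$, the same argument would go through, since the identity only involves the normal component $\mathbf{Q}_g(\nabla w)\cdot\mathbf{n}$, which equals $Q_n(\nabla w\cdot\mathbf{n})$.
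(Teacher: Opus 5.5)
Your proof is correct and takes the same approach as the paper: test elementwise against $[P_{k-1}(T)]^d$ or $P_{k-1}(T)$, drop $Q_0$, $Q_b$ and $\mathbf{Q}_g$ by the same degree checks, and finish with integration by parts (Green's second identity for the Laplacian). Your rewriting $\langle \mathbf{Q}_g(\nabla w)\cdot\mathbf{n},\phi\rangle_e=\langle \mathbf{Q}_g(\nabla w),\phi\,\mathbf{n}\rangle_e$ with $\phi\,\mathbf{n}\in[P_{k-1}(e)]^d$ (using that $\mathbf{n}$ is constant on each flat edge) just spells out the step the paper compresses into ``$q\in P_{k-1}(e)$''.
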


\begin{proof}
For any $\bm{\psi} \in [P_{k-1}(T)]^d$, the definition of the discrete weak gradient \eqref{disgradient} implies:
\[
\begin{aligned}
(\nabla_w Q_h w, \bm{\psi})_T &= -(Q_0 w, \nabla \cdot \bm{\psi})_T + \langle Q_b w, \bm{\psi} \cdot \mathbf{n} \rangle_{\partial T} \\
&= -(w, \nabla \cdot \bm{\psi})_T + \langle w, \bm{\psi} \cdot \mathbf{n} \rangle_{\partial T} \\
&= (\nabla w, \bm{\psi})_T = (\mathcal{Q}_h^{k-1} \nabla w, \bm{\psi})_T,
\end{aligned}
\]
where we used the fact that $\nabla \cdot \bm{\psi} \in P_{k-2}(T) \subset P_k(T)$ and $\bm{\psi} \cdot \mathbf{n} \in P_{k-1}(e) \subset P_k(e)$. This confirms \eqref{eq:commute1}.

Similarly, for any $q \in P_{k-1}(T)$, the definition of the discrete weak Laplacian \eqref{dislap} gives:
\[
\begin{aligned}
(\Delta_w Q_h w, q)_T &= (Q_0 w, \Delta q)_T - \langle Q_b w, \nabla q \cdot \mathbf{n} \rangle_{\partial T} + \langle \mathbf{Q}_g(\nabla w) \cdot \mathbf{n}, q \rangle_{\partial T} \\
&= (w, \Delta q)_T - \langle w, \nabla q \cdot \mathbf{n} \rangle_{\partial T} + \langle \nabla w \cdot \mathbf{n}, q \rangle_{\partial T} \\
&= (\Delta w, q)_T = (\mathcal{Q}_h^{k-1} \Delta w, q)_T,
\end{aligned}
\]
where we used $\Delta q \in P_{k-3}(T) \subset P_k(T)$, $\nabla q \cdot \mathbf{n} \in P_{k-2}(e) \subset P_k(e)$, and $q \in P_{k-1}(e)$. This proves \eqref{eq:commute2}.
\end{proof}

\begin{lemma}[Uniqueness]\label{thmunique1}
Assume that the continuous Cauchy problem \eqref{model} admits a unique solution. Then the LS-WG finite element scheme \eqref{al-general} possesses a unique solution $u_h \in W_h$.
\end{lemma}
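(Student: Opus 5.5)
Since \eqref{al-general} is a square linear system posed on an affine subspace of $W_h$ with test space $W_h^0$, uniqueness reduces to showing that the homogeneous problem has only the trivial solution. So let $u_h^{(1)}, u_h^{(2)}$ be two solutions and set $\eta_h = u_h^{(1)}-u_h^{(2)}$. Then $\eta_h \in W_h^0$, i.e.\ $\eta_b=0$ and $\boldsymbol{\eta}_g\cdot\mathbf n = 0$ on $\Gamma_1$, and $a(\eta_h,v_h)=0$ for all $v_h\in W_h^0$. Taking $v_h=\eta_h$ gives
\[
\sum_{T}\|-\epsilon\Delta_w\eta_h+\mathbf b\cdot\nabla_w\eta_h\|_T^2 + s(\eta_h,\eta_h)=0 .
\]
Both terms are nonnegative, so each vanishes.

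From $s(\eta_h,\eta_h)=0$ we get $\eta_0=\eta_b$ and $\nabla\eta_0\cdot\mathbf n=\boldsymbol\eta_g\cdot\mathbf n$ on every $\partial T$. Because $\eta_b$ is single-valued on interior edges, $\eta_0$ is continuous across edges. Hence $\eta_0\in C^0(\Omega)$, and thus $\eta_0\in H^1(\Omega)$.

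The main obstacle is the gradient. Along each edge the tangential derivatives of $\eta_0$ from the two sides agree, since $\eta_0$ is continuous. For the normal derivatives, I would argue that the flux components also match, so that $\nabla\eta_0$ is continuous across interior edges and $\eta_0\in H^2(\Omega)$. I need to be careful here: $\boldsymbol\eta_g$ is assigned per element, so I will take it to be single-valued on each edge, as with $v_b$, which is implicitly required for consistency with $Q_h u$. If that cannot be justified from the definition of $W_h$, I would instead work directly with the element-wise identities \eqref{dislap2} and \eqref{disgradient_star}.

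Substituting $\eta_0=\eta_b$ and $\nabla\eta_0\cdot\mathbf n=\boldsymbol\eta_g\cdot\mathbf n$ into \eqref{dislap2} and \eqref{disgradient_star}, the boundary terms drop out. This yields
\[
\Delta_w\eta_h=\mathcal Q_h^{k-1}(\Delta\eta_0), \qquad \nabla_w\eta_h=\mathcal Q_h^{k-1}\nabla\eta_0=\nabla\eta_0 ,
\]
where the last equality holds because $\nabla\eta_0\in[P_{k-1}(T)]^d$. The first term of the energy identity then gives
\[
-\epsilon\,\mathcal Q_h^{k-1}\Delta\eta_0+\mathbf b\cdot\nabla\eta_0=0 \quad \text{on each } T .
\]
Since $\Delta\eta_0\in P_{k-2}(T)$, we have $\mathcal Q_h^{k-1}\Delta\eta_0=\Delta\eta_0$. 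Therefore $-\epsilon\Delta\eta_0+\mathbf b\cdot\nabla\eta_0=0$ element-wise. If $\mathbf b$ is not piecewise polynomial, the product $\mathbf b\cdot\nabla_w\eta_h$ is not projected, but it is still computed pointwise, so the identity holds in $L^2(T)$ regardless.

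With $\eta_0\in H^2(\Omega)$ established in the previous step, $\eta_0$ solves $-\epsilon\Delta\eta_0+\mathbf b\cdot\nabla\eta_0=0$ in $\Omega$. On $\Gamma_1$ we have $\eta_0=\eta_b=0$ and $\nabla\eta_0\cdot\mathbf n=\boldsymbol\eta_g\cdot\mathbf n=0$. By the assumed uniqueness for \eqref{model} with zero data, $\eta_0\equiv0$.

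It remains to recover the other components. First, $\eta_b=\eta_0|_{\partial T}=0$ on every edge. Second, $\boldsymbol\eta_g\cdot\mathbf n=0$ on every edge. The tangential part of $\boldsymbol\eta_g$ is not controlled by $s$ or by the weak operators. I would therefore note that $\eta_h=0$ holds in the sense of the components entering the scheme, or equivalently interpret $W_h$ with $\mathbf v_g$ determined modulo tangential parts, i.e.\ effectively storing $\mathbf v_g\cdot\mathbf n$. This identification is what makes the solution unique in $W_h$.
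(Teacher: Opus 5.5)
Your proposal follows the paper's proof essentially step for step. You test the homogeneous equation with the solution itself, read off $u_0=u_b$ and $\nabla u_0\cdot\mathbf{n}=\mathbf{u}_g\cdot\mathbf{n}$ from $s=0$, deduce $u_0\in H^2(\Omega)$ from single-valuedness of $u_b$ and $\mathbf{u}_g\cdot\mathbf{n}$, reduce $\Delta_w$ and $\nabla_w$ to $\Delta u_0$ and $\nabla u_0$ via \eqref{dislap2} and \eqref{disgradient_star}, and invoke uniqueness for the continuous Cauchy problem.

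Both of your caveats are well founded, and the paper passes over them silently. First, its definition of $W_h$ makes only $v_b$ single-valued, yet the $H^2$ step genuinely needs $\mathbf{u}_g\cdot\mathbf{n}$ to be single-valued, so your element-wise fallback would not close the argument on its own. Second, its final claim $\mathbf{u}_g=0$ overreaches: the tangential part of $\mathbf{u}_g$ appears neither in $a(\cdot,\cdot)$ nor in the constraints of $W_h^0$, so uniqueness (and the norm property of $a(v,v)^{1/2}$) holds only modulo that part, exactly as you say.
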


\begin{proof}  It suffices to show that the homogeneous problem ($f=0, g_1=0, g_2=0$) admits only the trivial solution $u_h = 0$. Let $u_h \in W_h^0$ satisfy
\[
a(u_h, v_h) = 0, \quad \forall v_h \in W_h^0.
\]
By choosing $v_h = u_h$, we obtain $a(u_h, u_h) = 0$, which by the definition of the bilinear form \eqref{bilinear_a} implies:
\begin{enumerate}
    \item $s(u_h, u_h) = 0 \implies u_0 = u_b$ and $(\nabla u_0 - \mathbf{u}_g) \cdot \mathbf{n} = 0$ on $\partial T$ for all $T \in \mathcal{T}_h$.
    \item $-\epsilon \Delta_w u_h + \mathbf{b} \cdot \nabla_w u_h = 0$ on each $T \in \mathcal{T}_h$.
\end{enumerate}

From the property $u_0 = u_b$ on $\partial T$ and the fact that $u_b$ is single-valued on interior edges $\mathcal{E}_h^0$, we conclude that $u_0$ is continuous across all element interfaces, i.e., $u_0 \in H^1(\Omega)$. Furthermore, since $(\nabla u_0 - \mathbf{u}_g) \cdot \mathbf{n} = 0$ and $\mathbf{u}_g \cdot \mathbf{n}$ is single-valued on interior edges, the normal component of the gradient $\nabla u_0 \cdot \mathbf{n}$ is also continuous across interfaces. This implies $u_0 \in H^2(\Omega)$.

Using the equivalent definitions \eqref{dislap2} and \eqref{disgradient_star} with $u_0 = u_b$ and $\nabla u_0 \cdot \mathbf{n} = \mathbf{u}_g \cdot \mathbf{n}$, we find that on each element $T$:
\[
\Delta_w u_h =  \Delta u_0  \quad \text{and} \quad \nabla_w u_h =  \nabla u_0.
\]
Substituting these into the residual equation yields:
\[
-\epsilon \Delta u_0 + \mathbf{b} \cdot \nabla u_0 = 0 \quad \text{in } \Omega.
\]
Finally, the boundary conditions for $u_h \in W_h^0$ imply $u_b = 0$ and $\mathbf{u}_g \cdot \mathbf{n} = 0$ on $\Gamma_1$, which translates to $u_0 = 0$ and $\nabla u_0 \cdot \mathbf{n} = 0$ on $\Gamma_1$. By the uniqueness assumption of the continuous Cauchy problem, $u_0 \equiv 0$ in $\Omega$, which implies $u_b = 0$ and $\mathbf{u}_g = 0$. Thus, $u_h \equiv 0$.
\end{proof}

We define the energy norm on the finite element space $W_h^0$ by
\begin{equation}
\3bar v\3bar:= \sqrt{a(v, v)}.
\end{equation}
Following the logic of Lemma \ref{thmunique1}, it is evident that $\3bar\cdot\3bar$ satisfies the properties of a norm on $W_h^0$, providing a robust framework for the subsequent error analysis.
 
 \section{Error Equations}\label{Section:ErrorEquation}

In this section, we derive the error equation that governs the relationship between the exact solution $u$ and its LS-WG approximation $u_h$. Let $u$ be the exact solution of \eqref{model}, and let $u_h \in W_h$ be the solution to the discrete problem \eqref{al-general}. We define the error function as
\begin{equation}\label{error}
e_h := u_h - Q_h u = \{u_0 - Q_0 u, \ u_b - Q_b u, \ \mathbf{u}_g - \mathbf{Q}_g \nabla u\}.
\end{equation}
For the simplicity of the error analysis, we assume that the diffusion coefficient $\epsilon > 0$ and the convective velocity field $\mathbf{b}$ are piecewise constant with respect to the partition $\mathcal{T}_h$.   This assumption simplifies the presentation, and the results can be extended to piecewise smooth coefficients by considering additional higher-order consistency terms.

\begin{lemma}[Error Equation]\label{errorequa}
For any test function $v_h \in W_h^0$, the error function $e_h$ satisfies the following identity:
\begin{equation}\label{erroreqn}
a(e_h, v_h) = -s(Q_h u, v_h).
\end{equation}
\end{lemma}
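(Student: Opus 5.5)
We will prove the identity by evaluating $a(Q_h u, v_h)$ directly and comparing it with the scheme \eqref{al-general}. Since $e_h = u_h - Q_h u$ and $a(\cdot,\cdot)$ is bilinear, we have
\[
a(e_h, v_h) = a(u_h, v_h) - a(Q_h u, v_h), \qquad \forall v_h \in W_h^0 .
\]
By \eqref{al-general}, the first term equals $\sum_{T}(f, -\epsilon \Delta_w v_h + \mathbf{b}\cdot\nabla_w v_h)_T$. It therefore suffices to show
\[
a(Q_h u, v_h) = \sum_{T\in\mathcal{T}_h}(f, -\epsilon \Delta_w v_h + \mathbf{b}\cdot\nabla_w v_h)_T + s(Q_h u, v_h).
\]

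The key step is the least-squares residual term of $Q_h u$. On each element $T$, Lemma \ref{Lemma:commute} gives $\Delta_w(Q_h u) = \mathcal{Q}_h^{k-1}(\Delta u)$ and $\nabla_w(Q_h u) = \mathcal{Q}_h^{k-1}(\nabla u)$. Because $\epsilon$ and $\mathbf{b}$ are constant on $T$, the projection commutes with multiplication by them. Hence
\[
-\epsilon\Delta_w Q_h u + \mathbf{b}\cdot\nabla_w Q_h u = \mathcal{Q}_h^{k-1}\bigl(-\epsilon\Delta u + \mathbf{b}\cdot\nabla u\bigr) = \mathcal{Q}_h^{k-1} f .
\]
The test expression $-\epsilon \Delta_w v_h + \mathbf{b}\cdot\nabla_w v_h$ lies in $P_{k-1}(T)$, again using that $\epsilon,\mathbf{b}$ are piecewise constant. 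So the projection can be dropped in the inner product:
\[
(\mathcal{Q}_h^{k-1} f, -\epsilon \Delta_w v_h + \mathbf{b}\cdot\nabla_w v_h)_T = (f, -\epsilon \Delta_w v_h + \mathbf{b}\cdot\nabla_w v_h)_T .
\]
The stabilizer part of $a(Q_h u, v_h)$ is just $s(Q_h u, v_h)$. Adding the two pieces and subtracting from $a(u_h,v_h)$ yields \eqref{erroreqn}.

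The main obstacle is the legitimacy of the commutation step, which uses $u\in H^2(\Omega)$ and the piecewise-constant coefficient assumption. The subtle point is that the projection of $\mathbf{b}\cdot\nabla u$ must equal $\mathbf{b}\cdot\mathcal{Q}_h^{k-1}\nabla u$. With $\mathbf{b}$ constant on $T$, this follows directly from the definition of the $L^2$ projection: for $q\in P_{k-1}(T)$, we have $(\mathbf{b}\cdot\nabla u, q)_T = (\nabla u, \mathbf{b}q)_T$ and $\mathbf{b}q\in[P_{k-1}(T)]^d$.

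We also note that $e_h\in W_h^0$. Indeed, $u_b = Q_b g_1 = Q_b u$ on $\Gamma_1$. For the normal component we have $\mathbf{u}_g\cdot\mathbf{n} = Q_n g_2$, while $\mathbf{Q}_g(\nabla u)\cdot\mathbf{n} = Q_n(\nabla u\cdot\mathbf{n})$ because $\mathbf{n}$ is constant on flat edges. This membership is not needed for the identity itself, but it matters in later uses.
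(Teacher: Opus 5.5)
Your proof is correct and follows essentially the same argument as the paper. Both use the commutative properties of Lemma~\ref{Lemma:commute}, the piecewise-constant coefficients, and the fact that $-\epsilon\Delta_w v_h + \mathbf{b}\cdot\nabla_w v_h \in P_{k-1}(T)$ to get $a(Q_h u, v_h) - s(Q_h u, v_h) = \sum_T (f, -\epsilon\Delta_w v_h + \mathbf{b}\cdot\nabla_w v_h)_T$, and then subtract this from the scheme. The only difference is direction: you show $-\epsilon\Delta_w Q_h u + \mathbf{b}\cdot\nabla_w Q_h u = \mathcal{Q}_h^{k-1} f$ and drop the projection, while the paper tests $\mathcal{L}u = f$ and inserts it. Your explicit check that $\mathbf{b}\cdot\mathcal{Q}_h^{k-1}\nabla u = \mathcal{Q}_h^{k-1}(\mathbf{b}\cdot\nabla u)$ fills in a step the paper leaves implicit.
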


\begin{proof}
Let $\mathcal{L}$ denote the continuous operator $\mathcal{L} u := -\epsilon \Delta u + \mathbf{b} \cdot \nabla u = f$. Testing this continuous equation with the discrete operator $(\mathcal{L}_w v_h) |_T := -\epsilon \Delta_w v_h + \mathbf{b} \cdot \nabla_w v_h$ on each element $T \in \mathcal{T}_h$, we have
\begin{equation}\label{eq:err_proof1}
\sum_{T \in \mathcal{T}_h} (-\epsilon \Delta u + \mathbf{b} \cdot \nabla u, -\epsilon \Delta_w v_h + \mathbf{b} \cdot \nabla_w v_h)_T = \sum_{T \in \mathcal{T}_h}(f, -\epsilon \Delta_w v_h + \mathbf{b} \cdot \nabla_w v_h)_T.
\end{equation}
Note that $(-\epsilon \Delta_w v_h + \mathbf{b} \cdot \nabla_w v_h) \in P_{k-1}(T)$ on each element $T$. By applying the commutative properties \eqref{eq:commute1}--\eqref{eq:commute2} established in Lemma \ref{Lemma:commute}, we have:
\[
\mathcal{Q}_h^{k-1}(\Delta u) = \Delta_w Q_h u, \quad \mathcal{Q}_h^{k-1}(\nabla u) = \nabla_w Q_h u.
\]
Substituting these identities into \eqref{eq:err_proof1} yields
\an{\label{eq:err_proof2} \ad{ & \quad \ 
\sum_{T \in \mathcal{T}_h} (-\epsilon \Delta_w Q_h u + \mathbf{b} \cdot \nabla_w Q_h u, -\epsilon \Delta_w v_h + \mathbf{b} \cdot \nabla_w v_h)_T
\\ & =\sum_{T \in \mathcal{T}_h} (f, -\epsilon \Delta_w v_h + \mathbf{b} \cdot \nabla_w v_h)_T. }}
Recalling the definition of the least-squares bilinear form $a(\cdot, \cdot)$ in \eqref{bilinear_a}, the left-hand side of \eqref{eq:err_proof2} can be rewritten as $a(Q_h u, v_h) - s(Q_h u, v_h)$. Thus,
\[
a(Q_h u, v_h) - s(Q_h u, v_h) =\sum_{T \in \mathcal{T}_h} (f, -\epsilon \Delta_w v_h + \mathbf{b} \cdot \nabla_w v_h)_T.
\]
Subtracting this identity from the discrete LS-WG scheme \eqref{al-general}, we obtain
\[
a(u_h - Q_h u, v_h) + s(Q_h u, v_h) = 0,
\]
which simplifies to the desired error equation $a(e_h, v_h) = -s(Q_h u, v_h)$.
\end{proof}

\section{Error Estimates}\label{Section:ErrorEstimates}

In this section, we establish optimal-order error estimates for the LS-WG approximation in the energy norm. Throughout the analysis, we denote by $C$ a generic positive constant independent of the mesh parameter $h$.

\begin{lemma}[Approximation Properties]\label{lem:approx}
Let $\mathcal{T}_h$ be a shape-regular finite element partition of $\Omega$. For any $u \in H^{k+1}(\Omega)$, the following approximation estimates hold:
\begin{equation}\label{error_L2}
\sum_{T \in \mathcal{T}_h} \|u - Q_0 u\|_T^2 \le C h^{2k+2} \|u\|_{k+1}^2,
\end{equation}
\begin{equation}\label{error_H1}
\sum_{T \in \mathcal{T}_h} |u - Q_0 u|_{s,T}^2 \le C h^{2(k+1-s)} \|u\|_{k+1}^2, \quad s=1, 2.
\end{equation}
Furthermore, for any $\phi \in H^1(T)$, the following trace inequality is valid:
\begin{align}
\|\phi\|_{\partial T}^2 &\le C (h_T^{-1} \|\phi\|_T^2 + h_T \|\nabla \phi\|_T^2). \label{trace_H1} 
\end{align}
\end{lemma}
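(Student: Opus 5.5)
The plan is to derive the three estimates from standard polynomial approximation theory on shape-regular polytopal partitions, as in \cite{wy3655}, working one element at a time and then summing. Shape regularity in the sense of \cite{wy3655} means each $T$ is a finite union of simplices of comparable size, and that $T$ is star-shaped with respect to a ball of radius $\rho_T \ge c\, h_T$. This is the property we exploit. The main technical point is the trace inequality \eqref{trace_H1}. The projection estimates then follow from the Bramble--Hilbert lemma.

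For \eqref{error_L2} and \eqref{error_H1}, we will fix $T$ and let $p \in P_k(T)$ be the averaged Taylor polynomial of $u$ of degree $k$ over the ball guaranteed by shape regularity. The Bramble--Hilbert lemma, with constants depending only on the chunkiness parameter $h_T/\rho_T$, gives
\[
|u-p|_{s,T} \le C h_T^{k+1-s} |u|_{k+1,T}, \qquad 0\le s\le k+1 .
\]
Since $Q_0$ is the $L^2(T)$ projection onto $P_k(T)$, we have $\|u-Q_0u\|_T \le \|u-p\|_T$. This yields \eqref{error_L2} after squaring and summing over $T$.

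For $s=1,2$ we write $u-Q_0u = (u-p) + Q_0(p-u)$. The first piece is controlled by the Bramble--Hilbert estimate above. For the second piece, which is a polynomial, we apply the inverse inequality $|q|_{s,T}\le C h_T^{-s}\|q\|_T$ for $q\in P_k(T)$. Together with the $L^2$-stability of $Q_0$ this gives
\[
|Q_0(p-u)|_{s,T}\le Ch_T^{-s}\|u-p\|_T\le Ch_T^{k+1-s}|u|_{k+1,T} .
\]
The inverse inequality is valid on shape-regular polytopes: we apply it on each sub-simplex and use norm equivalence on the finite-dimensional space $P_k$. Summing over $T$ and using $h_T\le h$ then yields \eqref{error_H1}. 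For $s=2$ this requires $k\ge1$ and $u\in H^{k+1}$, which are exactly our hypotheses (when $k=1$ the piece $|p|_{2,T}$ vanishes, and the argument is unaffected).

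For the trace inequality \eqref{trace_H1}, we expect the obstacle to be producing a constant that is uniform over general polytopes, and we handle it with a vector-field argument. Let $x_T$ be the center of the ball with respect to which $T$ is star-shaped. Then $(x-x_T)\cdot\mathbf{n}\ge c\,h_T$ on each face of $\partial T$; this holds up to the sub-simplex decomposition, and alternatively one applies the argument on each sub-simplex sharing a face of $\partial T$. Applying the divergence theorem to $\phi^2 (x-x_T)$ gives
\[
c\,h_T\|\phi\|_{\partial T}^2 \le \int_T \bigl(d\,\phi^2 + 2\phi\nabla\phi\cdot(x-x_T)\bigr)\,dx \le C\bigl(\|\phi\|_T^2 + h_T\|\phi\|_T\|\nabla\phi\|_T\bigr).
\]
Dividing by $h_T$ and using Young's inequality, $h_T\|\phi\|_T\|\nabla\phi\|_T \le \tfrac12(\|\phi\|_T^2 + h_T^2\|\nabla\phi\|_T^2)$, yields \eqref{trace_H1}.
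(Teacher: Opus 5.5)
The paper gives no proof of this lemma. It is quoted as standard, with ``shape-regular'' meant in the sense of \cite{wy3655}. Under the hypothesis you actually use (each $T$ star-shaped with respect to a ball $B(x_T,\rho_T)$ with $\rho_T\ge c\,h_T$), your argument is correct. The averaged Taylor polynomial plus best approximation gives \eqref{error_L2}. The splitting $u-Q_0u=(u-p)+Q_0(p-u)$ with an inverse inequality gives \eqref{error_H1}. The divergence theorem applied to $\phi^2(x-x_T)$ gives \eqref{trace_H1}.

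Two steps can be tightened:
\begin{itemize}
\item The hedge in the trace step is unnecessary. At a point $x$ of a face with outward normal $\mathbf{n}$, star-shapedness forces $(x-y)\cdot\mathbf{n}\ge 0$ for every $y$ in the ball. Letting $y\to x_T+\rho_T\mathbf{n}$ gives $(x-x_T)\cdot\mathbf{n}\ge\rho_T$ exactly.
\item The inverse inequality follows from $B(x_T,\rho_T)\subset T\subset B(x_T,h_T)$ and equivalence of norms on $P_k$ over concentric balls of comparable radii. So you do not need a simplicial subdivision at all.
\end{itemize}

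What is not right is your description of \cite{wy3655}. Its assumptions are, roughly:
\begin{itemize}
\item $|T|\gtrsim h_T^d$, $|e|\gtrsim h_e^{d-1}$ and $h_e\gtrsim h_T$;
\item a shape-regular circumscribed simplex $S(T)$ of diameter $\lesssim h_T$, with finite overlap among these simplices;
\item for every face $e$, a pyramid inside $T$ with base $e$, apex $A_e$ and height $\gtrsim h_T$.
\end{itemize}
These admit elements that are not star-shaped; a U-shaped polygon can satisfy them with scale-invariant constants. For such elements there is no ball to average over, so your Bramble--Hilbert step and your global vector field $x-x_T$ are unavailable.

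The route matching that framework is as follows. Prove the trace inequality face by face, using your divergence argument on the pyramid over $e$ with the field $x-A_e$. Its normal component vanishes on the lateral faces and equals the height on $e$. Obtain the projection estimates through $S(T)$: approximate an extension of $u$ on the circumscribed simplex, then sum using the finite-overlap property.

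Your route is more elementary and self-contained. It also covers every mesh used in the paper: the triangles, and the non-convex pentagons of Figure~\ref{f-5}, which are star-shaped with respect to balls of radius proportional to $h_T$. The \cite{wy3655} route reaches the more general element class the paper invokes, at the cost of an extension operator and the circumscribed-simplex machinery. You should either state star-shapedness explicitly as your definition of shape regularity, or adapt the two steps as just described.
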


\begin{theorem}[Convergence in Energy Norm]\label{thm:convergence}
Let $u \in H^{k+1}(\Omega)$ be the exact solution of the convection--diffusion Cauchy problem \eqref{model}, and let $u_h \in W_h$ be the LS-WG solution defined by \eqref{al-general}. Then, there exists a constant $C > 0$ such that
\begin{equation}\label{main_estimate}
\3bar  u_h - Q_h u \3bar  \le C h^{k-1} \|u\|_{k+1}.
\end{equation}
\end{theorem}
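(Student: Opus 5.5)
The plan is the standard energy argument: test the error equation of Lemma \ref{errorequa} with the error itself, so that only the stabilizer applied to $Q_h u$ has to be estimated. First I need $e_h \in W_h^0$. Since $u_b = Q_b g_1 = Q_b u$ on $\Gamma_1$, the boundary component of $e_h$ vanishes there. For the gradient component I would check that $\mathbf{u}_g\cdot\mathbf{n} = Q_n g_2$ and that $(\mathbf{Q}_g\nabla u)\cdot\mathbf{n} = Q_n(\nabla u\cdot\mathbf{n})$ on each flat boundary edge; this holds because $\mathbf{n}$ is constant on $e$. Taking $v_h = e_h$ in \eqref{erroreqn} then gives
\[
\3bar e_h\3bar^2 = -s(Q_h u, e_h) \le s(Q_h u, Q_h u)^{1/2}\, s(e_h,e_h)^{1/2} \le s(Q_h u, Q_h u)^{1/2}\,\3bar e_h\3bar,
\]
by Cauchy--Schwarz for the positive semidefinite form $s$ and the bound $s(v,v)\le a(v,v)$. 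This reduces the theorem to showing $s(Q_h u,Q_h u)^{1/2}\le C h^{k-1}\|u\|_{k+1}$.

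Next I bound the two stabilizer terms separately. For the first term, on each edge $e\subset\partial T$ I write $Q_0u - Q_b u = Q_b(Q_0 u - u)$, using $Q_0u|_e\in P_k(e)$. Since $Q_b$ is an $L^2(e)$-contraction, $\|Q_0u-Q_bu\|_{\partial T}\le \|u-Q_0u\|_{\partial T}$. The trace inequality \eqref{trace_H1} gives
\[
h_T^{-3}\|u-Q_0u\|_{\partial T}^2 \le C\bigl(h_T^{-4}\|u-Q_0u\|_T^2 + h_T^{-2}|u-Q_0u|_{1,T}^2\bigr).
\]
Summing over $T$ and using \eqref{error_L2}--\eqref{error_H1}, this term is $O(h^{2k-2})\|u\|_{k+1}^2$.

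For the second term I write
\[
(\nabla Q_0u - \mathbf{Q}_g\nabla u)\cdot\mathbf{n} = (\nabla Q_0 u-\nabla u)\cdot\mathbf{n} + (\nabla u - \mathbf{Q}_g\nabla u)\cdot\mathbf{n}.
\]
The first piece is handled by \eqref{trace_H1} applied to $\nabla(u-Q_0u)$. It gives $h_T^{-1}\|\cdot\|_{\partial T}^2\le C(h_T^{-2}|u-Q_0u|_{1,T}^2+|u-Q_0u|_{2,T}^2)$, which is again $O(h^{2k-2})$. For the second piece, $\mathbf{Q}_g$ is the best $L^2(e)$ approximation in $[P_{k-1}(e)]^d$, so it is dominated by $\|\nabla u-\nabla Q_0u\|_{e}$, because $\nabla Q_0 u|_e\in[P_{k-1}(e)]^d$. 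That reduces it to the first piece.

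The main obstacle is the regularity needed for these local approximation estimates. When $k=1$, the trace inequality applied to $\nabla(u-Q_0u)$ needs $|u-Q_0u|_{2,T}$, which \eqref{error_H1} with $s=2$ controls only as $h^{0}\|u\|_{2}$. This still gives $O(h^{k-1})$, so it is consistent, but I would check carefully that the $s=2$ case is used within its stated range. The rest is routine summation, and combining the two bounds yields \eqref{main_estimate}.
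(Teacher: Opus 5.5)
Your proposal is correct and follows essentially the same route as the paper: test the error equation with $e_h$, use Cauchy--Schwarz on $s$ to reduce to $s(Q_h u,Q_h u)^{1/2}$, and bound the two stabilizer pieces using the contraction property of the $L^2$ projections, the trace inequality, and the approximation estimates. Two minor refinements of the same argument: you explicitly check that $e_h\in W_h^0$, which the paper takes for granted, and you split the gradient term with a triangle inequality rather than bounding it directly by $\|\nabla Q_0u-\nabla u\|_{\partial T}$ as the paper does.
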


\begin{proof}
By setting the test function $v_h = e_h$ in the error equation \eqref{erroreqn}, we have:
\[
\3bar  e_h \3bar ^2 = a(e_h, e_h) = -s(Q_h u, e_h).
\]
Applying the Cauchy--Schwarz inequality to the stabilization term $s(\cdot, \cdot)$, we obtain:
\[
\3bar  e_h \3bar ^2 \le |s(Q_h u, e_h)| \le \sqrt{s(Q_h u, Q_h u)} \sqrt{s(e_h, e_h)} \le \sqrt{s(Q_h u, Q_h u)} \3bar  e_h \3bar ,
\]
which implies $\3bar  e_h \3bar  \le \sqrt{s(Q_h u, Q_h u)}$. We now bound the two components of the stabilizer $s(Q_h u, Q_h u)$.

\noindent \textbf{Step 1:} Bound for $\sum_{T \in \mathcal{T}_h} h_T^{-3} \|Q_0 u - Q_b u\|_{\partial T}^2$. Using the fact that $Q_b u = Q_b (u|_{\partial T})$ and applying the trace inequality \eqref{trace_H1} and estimates \eqref{error_L2}-\eqref{error_H1}:
\[
\begin{aligned}
&\sum_{T \in \mathcal{T}_h} h_T^{-3} \|Q_0 u - Q_b u\|_{\partial T}^2 \\&\le \sum_{T \in \mathcal{T}_h} h_T^{-3} \|Q_0 u - u\|_{\partial T}^2 \\
&\le C \sum_{T \in \mathcal{T}_h} h_T^{-3} \left( h_T^{-1} \|Q_0 u - u\|_T^2 + h_T |Q_0 u - u|_{1,T}^2 \right) \\
&\le C \sum_{T \in \mathcal{T}_h} (h_T^{-4} h_T^{2k+2} + h_T^{-2} h_T^{2k}) \|u\|_{k+1,T}^2 \le C h^{2k-2} \|u\|_{k+1}^2.
\end{aligned}
\]

\noindent \textbf{Step 2:} Bound for $\sum_{T \in \mathcal{T}_h} h_T^{-1} \|(\nabla Q_0 u - \mathbf{Q}_g \nabla u) \cdot \mathbf{n}\|_{\partial T}^2$. Applying the trace inequality \eqref{trace_H1}   and  estimates \eqref{error_L2}-\eqref{error_H1}:
\[
\begin{aligned}
&\sum_{T \in \mathcal{T}_h} h_T^{-1} \|(\nabla Q_0 u - \mathbf{Q}_g \nabla u) \cdot \mathbf{n}\|_{\partial T}^2 \\&\le \sum_{T \in \mathcal{T}_h} h_T^{-1} \|\nabla Q_0 u - \nabla u\|_{\partial T}^2 \\
&\le C \sum_{T \in \mathcal{T}_h} h_T^{-1} \left( h_T^{-1} |\nabla Q_0 u - \nabla u|_{0,T}^2 + h_T |\nabla Q_0 u - \nabla u|_{1,T}^2 \right) \\
&\le C \sum_{T \in \mathcal{T}_h} (h_T^{-2} h_T^{2k} + h_T^{2k-2}) \|u\|_{k+1,T}^2 \le C h^{2k-2} \|u\|_{k+1}^2.
\end{aligned}
\]
Combining the estimates from Step 1 and Step 2, we conclude $\3bar e_h \3bar ^2 \le C h^{2k-2} \|u\|_{k+1}^2$, which completes the proof.
\end{proof}

\section{Numerical Experiments}

In the first numerical test,  we solve the Cauchy problem \eqref{model}
   on the unit square domain $\Omega=(0,1)\times(0,1)$, where 
\a{ \Gamma_1=\{0\}\times (0,1)\cup (0,1)\times\{0\}, \ \ \epsilon=10^{-2} \ \t{or} \ 10^{-7 } , \ \ \b b=\p{1\\1}, }
and  $(f, g_1, g_2)$ are chosen so that the exact solution is
\an{\label{s2}
   u =-(2x^3 + y + 1)^2.  }  
 We compute the solution \eqref{s2} on the triangular grids shown in Figure \ref{f-2}, and on the non-convex polygonal 
  grids shown in Figure \ref{f-5}, by 
  the weak Galerkin $P_k$-$P_k$-$P_{k-1}^2$/$P_k^{2\times 2}$ finite elements, $k=2,3$ and $4$.
The results are listed in Tables \ref{t1}-\ref{t3}, 
    where we can see that the optimal orders of convergence 
  are achieved roughly.  
  In these tables, $G_i$ denotes the $i$-th grid.  For example, $G_1$ in Table \ref{t1} is shown in
    Figure \ref{f-2} or Figure \ref{f-5} .
Also in these tables, the $H^2$-like norm
\a{ \3bar u \3bar^2_1 = a(u,u) \quad \t{with } \ \epsilon=1, }
where $a(u,u)$ is defined in \eqref{bilinear_a}.
    
\begin{figure}[H]
\begin{center}\setlength\unitlength{2.4pt}\centering 
 \begin{picture}(140,45)(0,0) \put(0,41){$G_1:$}  \put(50,41){$G_2:$} \put(100,41){$G_3:$} 
  
\def\sq{\begin{picture}(40,40)(0,0) \put(0,40){\line(1,-1){40}}
  \multiput(0,0)(40,0){2}{\line(0,1){40}}\multiput(0,0)(0,40){2}{\line(1,0){40}} \end{picture} }
  
\put(0,0){\begin{picture}(40,40)(0,0)
  \multiput(0,0)(0,40){1}{\multiput(0,0)(40,0){1}{\sq}} 
  \end{picture} }
  
\put(50,0){\setlength\unitlength{1.2pt}\begin{picture}(40,40)(0,0)
  \multiput(0,0)(0,40){2}{\multiput(0,0)(40,0){2}{\sq}} 
  \end{picture} } 
\put(100,0){\setlength\unitlength{0.6pt}\begin{picture}(40,40)(0,0)
  \multiput(0,0)(0,40){4}{\multiput(0,0)(40,0){4}{\sq}} 
  \end{picture} } 
\end{picture}\end{center}
\caption{The triangular  grids used in Tables \ref{t1}--\ref{t6}. }
\label{f-2}
\end{figure}

\begin{figure}[H]
\begin{center}\setlength\unitlength{2.4pt}\centering 
 \begin{picture}(140,45)(0,0) \put(0,41){$G_1:$}  \put(50,41){$G_2:$} \put(100,41){$G_3:$} 
  
\def\sq{\begin{picture}(40,40)(0,0) \put(0,0){\line(1,3){10}}  \put(40,40){\line(-1,-3){10}} \put(10,30){\line(1,-1){20}}
  \multiput(0,0)(40,0){2}{\line(0,1){40}}\multiput(0,0)(0,40){2}{\line(1,0){40}} \end{picture} }
  
\put(0,0){\begin{picture}(40,40)(0,0)
  \multiput(0,0)(0,40){1}{\multiput(0,0)(40,0){1}{\sq}} 
  \end{picture} }
  
\put(50,0){\setlength\unitlength{1.2pt}\begin{picture}(40,40)(0,0)
  \multiput(0,0)(0,40){2}{\multiput(0,0)(40,0){2}{\sq}} 
  \end{picture} } 

\put(100,0){\setlength\unitlength{0.6pt}\begin{picture}(40,40)(0,0)
  \multiput(0,0)(0,40){4}{\multiput(0,0)(40,0){4}{\sq}} 
  \end{picture} } 
\end{picture}\end{center}
\caption{The non-convex polygonal grids used in Tables \ref{t1}--\ref{t6}. }
\label{f-5}
\end{figure}

\begin{table}[H]
  \centering  \renewcommand{\arraystretch}{1.1}
  \caption{Error profile by the $P_2$ WG element  for computing \eqref{s2}. }
  \label{t1}
\begin{tabular}{c|cc|cc}
\hline
Grid $G_i$ & \quad $\| u-u_h\|_{0}$ & $O(h^r)$ & \  $\3bar u-u_h\3bar_{1}$& $O(h^r)$   \\ \hline
    &  \multicolumn{4}{c}{On triangular meshes (Figure \ref{f-2}), $\epsilon=10^{-2}$}    \\
\hline  
 3&    0.141E-01 &  2.5&    0.163E+02 &  1.1\\
 4&    0.199E-02 &  2.8&    0.762E+01 &  1.1\\
 5&    0.325E-03 &  2.6&    0.354E+01 &  1.1\\
\hline 
    &  \multicolumn{4}{c}{On triangular meshes (Figure \ref{f-2}), $\epsilon=10^{-7}$}    \\
\hline  
 3&    0.141E-01 &  2.5&    0.163E+02 &  1.1\\
 4&    0.199E-02 &  2.8&    0.762E+01 &  1.1\\
 5&    0.325E-03 &  2.6&    0.354E+01 &  1.1\\
\hline 
    &  \multicolumn{4}{c}{ On  polygonal meshes (Figure \ref{f-5}),  $\epsilon=10^{-2}$}    \\
\hline   
 3&    0.498E-01 &  2.8&    0.307E+02 &  0.9\\
 4&    0.665E-02 &  2.9&    0.154E+02 &  1.0\\
 5&    0.860E-03 &  3.0&    0.775E+01 &  1.0\\
\hline 
    &  \multicolumn{4}{c}{ On  polygonal meshes (Figure \ref{f-5}),  $\epsilon=10^{-7}$}    \\
\hline   
 3&    0.529E-01 &  2.7&    0.307E+02 &  0.8\\
 4&    0.720E-02 &  2.9&    0.155E+02 &  1.0\\
 5&    0.936E-03 &  2.9&    0.777E+01 &  1.0\\
\hline 
    \end{tabular}%
\end{table}%

\begin{table}[H]
  \centering  \renewcommand{\arraystretch}{1.1}
  \caption{Error profile by the $P_3$ WG element  for computing \eqref{s2}. }
  \label{t2}
\begin{tabular}{c|cc|cc}
\hline
Grid $G_i$ & \quad $\| u-u_h\|_{0}$ & $O(h^r)$ & \  $\3bar u-u_h\3bar_{1}$& $O(h^r)$   \\ \hline
    &  \multicolumn{4}{c}{On triangular meshes (Figure \ref{f-2}), $\epsilon=10^{-2}$}    \\
\hline  
 3&    0.740E-03 &  3.8&    0.254E+01 &  2.0\\
 4&    0.537E-04 &  3.8&    0.484E+00 &  2.4\\
 5&    0.988E-05 &  2.4&    0.116E+00 &  2.1\\
\hline 
    &  \multicolumn{4}{c}{On triangular meshes (Figure \ref{f-2}), $\epsilon=10^{-7}$}    \\
\hline  
 3&    0.130E-02 &  3.4&    0.179E+01 &  2.2\\
 4&    0.104E-03 &  3.6&    0.366E+00 &  2.3\\
 5&    0.746E-05 &  3.8&    0.854E-01 &  2.1\\
\hline 
    &  \multicolumn{4}{c}{ On  polygonal meshes (Figure \ref{f-5}),  $\epsilon=10^{-2}$}    \\
\hline   
 3&    0.496E-02 &  3.8&    0.852E+01 &  1.9\\
 4&    0.331E-03 &  3.9&    0.220E+01 &  2.0\\
 5&    0.226E-04 &  3.9&    0.557E+00 &  2.0\\
\hline 
    &  \multicolumn{4}{c}{ On  polygonal meshes (Figure \ref{f-5}),  $\epsilon=10^{-7}$}    \\
\hline   
 3&    0.521E-02 &  3.8&    0.844E+01 &  1.9\\
 4&    0.357E-03 &  3.9&    0.217E+01 &  2.0\\
 5&    0.234E-04 &  3.9&    0.548E+00 &  2.0\\
\hline 
    \end{tabular}%
\end{table}%

\begin{table}[H]
  \centering  \renewcommand{\arraystretch}{1.1}
  \caption{Error profile by the $P_4$ WG element  for computing \eqref{s2}. }
  \label{t3}
\begin{tabular}{c|cc|cc}
\hline
Grid $G_i$ & \quad $\| u-u_h\|_{0}$ & $O(h^r)$ & \  $\3bar u-u_h\3bar_{1}$& $O(h^r)$   \\ \hline
    &  \multicolumn{4}{c}{On triangular meshes (Figure \ref{f-2}), $\epsilon=10^{-2}$}    \\
\hline  
 2&    0.782E-03 &  5.4&    0.125E+01 &  2.8\\
 3&    0.290E-04 &  4.8&    0.134E+00 &  3.2\\
 4&    0.160E-05 &  4.2&    0.161E-01 &  3.1\\
\hline 
    &  \multicolumn{4}{c}{On triangular meshes (Figure \ref{f-2}), $\epsilon=10^{-7}$}    \\
\hline  
 2&    0.864E-03 &  5.3&    0.122E+01 &  2.9\\
 3&    0.292E-04 &  4.9&    0.131E+00 &  3.2\\
 4&    0.105E-05 &  4.8&    0.143E-01 &  3.2\\
\hline 
    &  \multicolumn{4}{c}{ On  polygonal meshes (Figure \ref{f-5}),  $\epsilon=10^{-2}$}    \\
\hline   
 2&    0.113E-01 &  4.8&    0.137E+02 &  2.8\\
 3&    0.366E-03 &  5.0&    0.177E+01 &  3.0\\
 4&    0.115E-04 &  5.0&    0.225E+00 &  3.0\\
\hline 
    &  \multicolumn{4}{c}{ On  polygonal meshes (Figure \ref{f-5}),  $\epsilon=10^{-7}$}    \\
\hline   
 2&    0.116E-01 &  4.8&    0.137E+02 &  2.8\\
 3&    0.376E-03 &  4.9&    0.177E+01 &  3.0\\
 4&    0.121E-04 &  5.0&    0.224E+00 &  3.0\\
\hline 
    \end{tabular}%
\end{table}%

\begin{figure}[H]
 \begin{center}\setlength\unitlength{1.0pt}
\begin{picture}(400,240)(0,0) 
  \put(0,-255){\includegraphics[width=400pt]{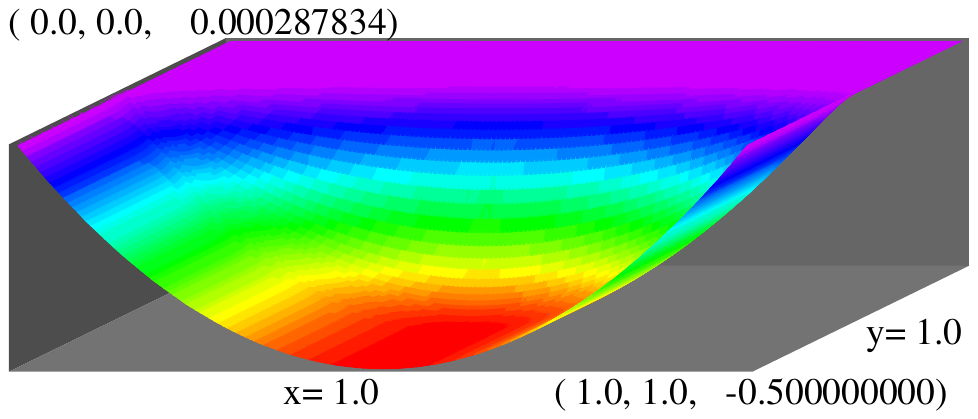}}  
  \put(0,-385){\includegraphics[width=400pt]{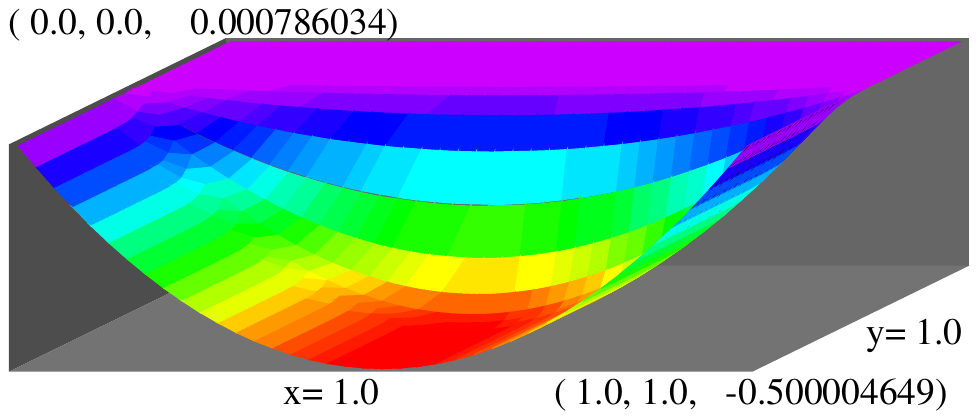}}  
 \end{picture}\end{center}
\caption{The exact solution $u$ in \eqref{s5} (Top) and the $P_4$ WG solution $u_h$ (Bottom) when $\epsilon=10^{-3}$. }\label{f-s5}
\end{figure}

In the second numerical test,  we solve a Cauchy problem with an internal layer.
That is, we solve the Cauchy problem \eqref{model}
   on the unit square domain $\Omega=(0,1)\times(0,1)$, where 
\a{ \Gamma_1=\{0\}\times (0,1)\cup (0,1)\times\{0\}, \ \ \epsilon=10^{-3} \ \t{or} \ 10^{-9 } , \ \ \b b=\p{0\\1}, }
and  $(f, g_1, g_2)$ are chosen so that the exact solution is
\an{\label{s5}
   u =(y^2 - y) (1 + \tanh(20x-10)).  }  
The solution \eqref{s5} and its numerical approximation by $P_4$ WG finite elements are plotted
  in Figure \ref{f-s5}.

 We compute the solution \eqref{s5} on the triangular grids shown in Figure \ref{f-2}, and on the non-convex polygonal 
  grids shown in Figure \ref{f-5}, by 
  the weak Galerkin $P_k$-$P_k$-$P_{k-1}^2$/$P_k^{2\times 2}$ finite elements, $k=2,3$ and $4$.
The results are listed in Tables \ref{t4}-\ref{t6}, 
    where   the   order of convergence is not very stable, due to the ill-posed Cauchy problem and the internal layer.

\begin{table}[H]
  \centering  \renewcommand{\arraystretch}{1.1}
  \caption{Error profile by the $P_2$ WG element  for computing \eqref{s5}. }
  \label{t4}
\begin{tabular}{c|cc|cc}
\hline
Grid $G_i$ & \quad $\| u-u_h\|_{0}$ & $O(h^r)$ & \  $\3bar u-u_h\3bar_{1}$& $O(h^r)$   \\ \hline
    &  \multicolumn{4}{c}{On triangular meshes (Figure \ref{f-2}), $\epsilon=10^{-3}$}    \\
\hline  
 4&    0.321E-02 &  1.1&    0.821E+01 &  1.0\\
 5&    0.560E-03 &  2.5&    0.468E+01 &  0.8\\
 6&    0.138E-03 &  2.0&    0.389E+01 &  0.3\\
\hline 
    &  \multicolumn{4}{c}{On triangular meshes (Figure \ref{f-2}), $\epsilon=10^{-9}$}    \\
\hline  
 4&    0.120E-02 &  2.1&    0.110E+02 &  0.9\\
 5&    0.160E-03 &  2.9&    0.941E+01 &  0.2\\
 6&    0.262E-04 &  2.6&    0.664E+01 &  0.5\\
\hline 
    &  \multicolumn{4}{c}{ On  polygonal meshes (Figure \ref{f-5}),  $\epsilon=10^{-3}$}    \\
\hline   
 4&    0.936E-02 &  1.6&    0.300E+02 &  0.0\\
 5&    0.120E-02 &  3.0&    0.918E+01 &  1.7\\
 6&    0.280E-03 &  2.1&    0.935E+01 &  0.0\\
\hline 
    &  \multicolumn{4}{c}{ On  polygonal meshes (Figure \ref{f-5}),  $\epsilon=10^{-9}$}    \\
\hline   
 4&    0.791E-02 &  1.7&    0.361E+02 &  0.1\\
 5&    0.890E-03 &  3.2&    0.146E+02 &  1.3\\
 6&    0.163E-03 &  2.5&    0.146E+02 &  0.0\\
\hline 
    \end{tabular}%
\end{table}%

\begin{table}[H]
  \centering  \renewcommand{\arraystretch}{1.1}
  \caption{Error profile by the $P_3$ WG element  for computing \eqref{s5}. }
  \label{t5}
\begin{tabular}{c|cc|cc}
\hline
Grid $G_i$ & \quad $\| u-u_h\|_{0}$ & $O(h^r)$ & \  $\3bar u-u_h\3bar_{1}$& $O(h^r)$   \\ \hline
    &  \multicolumn{4}{c}{On triangular meshes (Figure \ref{f-2}), $\epsilon=10^{-3}$}    \\
\hline  
 3&    0.395E-02 &  1.9&    0.922E+01 &  1.8\\
 4&    0.128E-02 &  1.6&    0.737E+01 &  0.3\\
 5&    0.240E-03 &  2.4&    0.410E+01 &  0.8\\
\hline 
    &  \multicolumn{4}{c}{On triangular meshes (Figure \ref{f-2}), $\epsilon=10^{-9}$}    \\
\hline  
 3&    0.318E-02 &  0.8&    0.115E+02 &  1.2\\
 4&    0.456E-03 &  2.8&    0.144E+02 &  0.0\\
 5&    0.482E-04 &  3.2&    0.777E+01 &  0.9\\
\hline 
    &  \multicolumn{4}{c}{ On  polygonal meshes (Figure \ref{f-5}),  $\epsilon=10^{-3}$}    \\
\hline   
 3&    0.348E-01 &  1.5&    0.803E+02 &  0.0\\
 4&    0.511E-02 &  2.8&    0.284E+02 &  1.5\\
 5&    0.104E-02 &  2.3&    0.279E+02 &  0.0\\
\hline 
    &  \multicolumn{4}{c}{ On  polygonal meshes (Figure \ref{f-5}),  $\epsilon=10^{-9}$}    \\
\hline   
 3&    0.396E-01 &  1.1&    0.112E+03 &  0.0\\
 4&    0.384E-02 &  3.4&    0.338E+02 &  1.7\\
 5&    0.634E-03 &  2.6&    0.325E+02 &  0.1\\
\hline 
    \end{tabular}%
\end{table}%

\begin{table}[H]
  \centering  \renewcommand{\arraystretch}{1.1}
  \caption{Error profile by the $P_4$ WG element  for computing \eqref{s5}. }
  \label{t6}
\begin{tabular}{c|cc|cc}
\hline
Grid $G_i$ & \quad $\| u-u_h\|_{0}$ & $O(h^r)$ & \  $\3bar u-u_h\3bar_{1}$& $O(h^r)$   \\ \hline
    &  \multicolumn{4}{c}{On triangular meshes (Figure \ref{f-2}), $\epsilon=10^{-3}$}    \\
\hline  
 2&    0.146E-01 &  3.5&    0.384E+02 &  0.0\\
 3&    0.215E-02 &  2.8&    0.898E+01 &  2.1\\
 4&    0.842E-03 &  1.4&    0.928E+01 &  0.0\\
\hline 
    &  \multicolumn{4}{c}{On triangular meshes (Figure \ref{f-2}), $\epsilon=10^{-9}$}    \\
\hline  
 2&    0.536E-02 &  5.0&    0.224E+02 &  0.2\\
 3&    0.148E-02 &  1.9&    0.160E+02 &  0.5\\
 4&    0.200E-03 &  2.9&    0.153E+02 &  0.1\\
\hline 
    &  \multicolumn{4}{c}{ On  polygonal meshes (Figure \ref{f-5}),  $\epsilon=10^{-3}$}    \\
\hline   
 2&    0.303E+00 &  4.1&    0.340E+03 &  1.8\\
 3&    0.456E-01 &  2.7&    0.147E+03 &  1.2\\
 4&    0.707E-02 &  2.7&    0.144E+03 &  0.0\\
\hline 
    &  \multicolumn{4}{c}{ On  polygonal meshes (Figure \ref{f-5}),  $\epsilon=10^{-9}$}    \\
\hline   
 2&    0.287E+00 &  4.2&    0.320E+03 &  2.0\\
 3&    0.320E-01 &  3.2&    0.161E+03 &  1.0\\
 4&    0.536E-02 &  2.6&    0.150E+03 &  0.1\\
\hline 
    \end{tabular}%
\end{table}%

\end{document}